\theoremstyle{plain}
\newtheorem{lemma}{Lemma}[section]
  \newtheorem{theorem}[lemma]{Theorem}
  \newtheorem*{theorem*}{Theorem}
  \newtheorem*{fact*}{Fact}
  \newtheorem*{claim*}{Claim}
\theoremstyle{definition}
  \newtheorem{definition}[lemma]{Definition}
\theoremstyle{remark}
  \newtheorem{remark}[lemma]{Remark}
\newsavebox{\@brx}
\newcommand{\llangle}[1][]{\savebox{\@brx}{\(\m@th{#1\langle}\)}%
  \mathopen{\copy\@brx\kern-0.5\wd\@brx\usebox{\@brx}}}
\newcommand{\rrangle}[1][]{\savebox{\@brx}{\(\m@th{#1\rangle}\)}%
  \mathclose{\copy\@brx\kern-0.5\wd\@brx\usebox{\@brx}}}
\title{Periodic nilsequences and inverse theorems on cyclic groups}
\author{Freddie Manners}
\address{Mathematical Institute, Radcliffe Observatory Quarter, Woodstock Road, Oxford OX2 6GG}
\email{Frederick.Manners@maths.ox.ac.uk}
\begin{document}

\newcommand{\eps}[0]{\varepsilon}

\newcommand{\AAA}[0]{\mathbb{A}}
\newcommand{\CC}[0]{\mathbb{C}}
\newcommand{\EE}[0]{\mathbb{E}}
\newcommand{\FF}[0]{\mathbb{F}}
\newcommand{\NN}[0]{\mathbb{N}}
\newcommand{\PP}[0]{\mathbb{P}}
\newcommand{\QQ}[0]{\mathbb{Q}}
\newcommand{\RR}[0]{\mathbb{R}}
\newcommand{\TT}[0]{\mathbb{T}}
\newcommand{\ZZ}[0]{\mathbb{Z}}

\newcommand{\cA}[0]{\mathscr{A}}
\newcommand{\cB}[0]{\mathscr{B}}
\newcommand{\cC}[0]{\mathscr{C}}
\newcommand{\cH}[0]{\mathscr{H}}
\newcommand{\cG}[0]{\mathscr{G}}
\newcommand{\cK}[0]{\mathscr{K}}
\newcommand{\cM}[0]{\mathscr{M}}
\newcommand{\cN}[0]{\mathscr{N}}
\newcommand{\cP}[0]{\mathscr{P}}
\newcommand{\cS}[0]{\mathscr{S}}
\newcommand{\cX}[0]{\mathscr{X}}
\newcommand{\cY}[0]{\mathscr{Y}}
\newcommand{\cZ}[0]{\mathscr{Z}}

\newcommand{\fg}[0]{\mathfrak{g}}
\newcommand{\fk}[0]{\mathfrak{k}}
\newcommand{\fZ}[0]{\mathfrak{Z}}

\newcommand{\bmu}[0]{\boldsymbol\mu}

\newcommand{\AUT}[0]{\mathbf{Aut}}
\newcommand{\Aut}[0]{\operatorname{Aut}}
\newcommand{\GI}[0]{\operatorname{GI}}
\newcommand{\HK}[0]{\operatorname{HK}}
\newcommand{\HOM}[0]{\mathbf{Hom}}
\newcommand{\Hom}[0]{\operatorname{Hom}}
\newcommand{\Ind}[0]{\operatorname{Ind}}
\newcommand{\Lip}[0]{\operatorname{Lip}}
\newcommand{\LHS}[0]{\operatorname{LHS}}
\newcommand{\RHS}[0]{\operatorname{RHS}}
\newcommand{\Sub}[0]{\operatorname{Sub}}
\newcommand{\id}[0]{\operatorname{id}}
\newcommand{\image}[0]{\operatorname{Im}}
\newcommand{\poly}[0]{\operatorname{poly}}
\newcommand{\trace}[0]{\operatorname{Tr}}
\newcommand{\sig}[0]{\ensuremath{\tilde{\cS}}}
\newcommand{\psig}[0]{\ensuremath{\cP\tilde{\cS}}}
\newcommand{\metap}[0]{\operatorname{Mp}}
\newcommand{\symp}[0]{\operatorname{Sp}}
\newcommand{\HCF}[0]{\operatorname{hcf}}
\newcommand{\LCM}[0]{\operatorname{lcm}}

\newcommand{\Conv}[0]{\mathop{\scalebox{1.5}{\raisebox{-0.2ex}{$\ast$}}}}
\newcommand{\bs}[0]{\backslash}

\newcommand{\heis}[3]{ \left(\begin{smallmatrix} 1 & \hfill #1 & \hfill #3 \\ 0 & \hfill 1 & \hfill #2 \\ 0 & \hfill 0 & \hfill 1 \end{smallmatrix}\right)  }

\newcommand{\uppar}[1]{\textup{(}#1\textup{)}}

\maketitle

\begin{abstract}
  The inverse theorem for the Gowers norms, in the form proved by Green, Tao and Ziegler, applies to functions on an interval $[M]$.  A recent paper of Candela and Sisask requires a stronger conclusion when applied to $N$-periodic functions; specifically, that the corresponding nilsequence should also be $N$-periodic in a strong sense.

  In most cases, this result is implied by work of Szegedy (and Camarena and Szegedy) on the inverse theorem.  This deduction is given in Candela and Sisask's paper.  Here, we give an alternative proof, which uses only the Green--Tao--Ziegler inverse theorem as a black box.  The result is also marginally stronger, removing a technical condition from the statement.

  The proof centers around a general construction in the category of nilsequences and nilmanifolds, which is possibly of some independent interest.
\end{abstract}

\tableofcontents

\section{Introduction}

The inverse theorem for the Gowers norms has a central role in many recent developments in additive combinatorics and related fields.  We briefly recall the statement, as it appears in \cite[Conjecture 1.2]{gi-paper}.

\begin{theorem}
  \label{theorem:gis-gtz}
  Fix an integer $s \ge 0$ and $\delta > 0$.  Then there exists a finite collection $\cM_{s, \delta}$ of tuples $(G, \Gamma, d_G)$, where $G$ is an $s$-step filtered, nilpotent Lie group, $\Gamma$ a lattice in $G$ \uppar{i.e.~a discrete co-compact subgroup}, and $d_G$ a left-invariant Riemannian metric on $G$, such that the following holds:  for any positive integer $N$, and any function $f \colon [N] \to \CC$ with $|f| \le 1$ and $\|f\|_{U^{s+1}[N]} \ge \delta$, there exists
  \begin{enumerate}
    \item a tuple $(G, \Gamma, d_G) \in \cM_{s, \delta}$;
    \item a polynomial map $p \colon \ZZ \to G$;
    \item a function $F \colon G \to \CC$ that is automorphic \footnote{This simply means that $F(\gamma x) = F(x)$ for all $x \in G$ and $\gamma \in \Gamma$, i.e.~$F$ descends to a function $\Gamma \bs G \to \CC$.  One could also use the term ``$\Gamma$-periodic (on the left)''.} with respect to $\Gamma$, bounded in magnitude by $1$ and is $O_{s,\delta}(1)$-Lipschitz with respect to $d_G$;
  \end{enumerate}
  such that the \emph{nilsequence} $F \circ p$ correlates with $f$, i.e.~$|\EE_{x \in [N]} f(x) \cdot \overline{F \circ p(x)}| \gg_{s, \delta} 1$.
\end{theorem}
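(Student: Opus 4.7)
The plan is induction on the step $s$. The base case $s=0$ is immediate: $\|f\|_{U^1[N]} = |\EE_{x \in [N]} f(x)|$, so $f$ correlates with the constant function, which is a $0$-step nilsequence on the trivial group. For $s=1$, expanding $\|f\|_{U^2[N]}^4$ and applying Plancherel on a modest cyclic extension containing $[N]$ shows that $\|f\|_{U^2[N]} \ge \delta$ forces $f$ to correlate with some linear phase $e(\alpha n)$, which is a $1$-step nilsequence on $G = \RR$, $\Gamma = \ZZ$.

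For the inductive step, assume the theorem for step $s-1$ and fix $f$ with $\|f\|_{U^{s+1}[N]} \ge \delta$. The identity $\|f\|_{U^{s+1}}^{2^{s+1}} = \EE_h \|\Delta_h f\|_{U^s}^{2^s}$, where $\Delta_h f(x) = f(x+h)\overline{f(x)}$, shows that a $\gg_{\delta} 1$ fraction of shifts $h$ yield a derivative $\Delta_h f$ of large $U^s$-norm. By induction, each such derivative correlates with an $(s-1)$-step nilsequence $\chi_h = F_h \circ p_h$ drawn from the finite catalogue $\cM_{s-1, \delta'}$. Pigeonholing selects a single pair $(G_0, \Gamma_0)$ working for a positive proportion of the good $h$, and the Green--Tao quantitative equidistribution and factorization theorem for polynomial sequences then places each $p_h$ into a normal form relative to some rational subgroup flag of $G_0$.

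The main obstacle, and the technical heart of the Green--Tao--Ziegler argument, is the \emph{symmetry} or \emph{integration} step: one must show that the family $\{\chi_h\}_h$ behaves like a genuine derivative, i.e.\ that $\chi_h(x) \approx \Psi(x+h)\overline{\Psi(x)}$ for a single $s$-step nilcharacter $\Psi$. The key input is that the correlation $\EE_h \EE_x \Delta_h f(x) \overline{\chi_h(x)}$ inherits symmetry from the Gowers inner product, forcing the top-degree piece of $\chi_h$ to satisfy an approximate cocycle identity in $h$. Solving this cocycle requires constructing a central extension of $G_0$ carrying the new $s$-th filtration degree, and matching the bracket / Heisenberg-type polynomial combinations arising in the cocycle to Mal'cev coordinates on that extension. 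Once $\Psi$ is built on the enlarged group $G$, a final Cauchy--Schwarz argument lifts the derivative correlation of $\chi_h$ with $\Delta_h f$ to the desired correlation of $\Psi \circ p$ with $f$. This cocycle-solving step, with its delicate Lie-theoretic bookkeeping of how central extensions interact with the lower central series and its repeated use of quantitative equidistribution on nilmanifolds, consumes the bulk of the original Green--Tao--Ziegler paper and is taken here as a black box.
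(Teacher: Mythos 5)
The paper does not prove this statement at all: Theorem \ref{theorem:gis-gtz} is the Green--Tao--Ziegler inverse theorem, quoted from \cite{gi-paper} (where it appears as Conjecture 1.2, later proved by Green, Tao and Ziegler) and used strictly as a black box; the paper's actual content is Theorems \ref{theorem:main-thm} and \ref{theorem:periodic-lift}. So there is no internal proof to compare yours against, and the honest move here is the one the paper makes: cite the result.

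Judged on its own terms, your proposal is a reasonable high-level summary of the Green--Tao--Ziegler strategy (induction on $s$, the identity $\|f\|_{U^{s+1}[N]}^{2^{s+1}} = \EE_h \|\Delta_h f\|_{U^s}^{2^s}$, pigeonholing in $h$, equidistribution and factorization of polynomial sequences, the symmetry/integration step, and a closing Cauchy--Schwarz), but it is not a proof: you explicitly take ``as a black box'' the cocycle-solving/symmetrization step that constructs the $s$-step nilcharacter, and that step \emph{is} the theorem --- deferring it means the argument proves nothing beyond what is assumed. Even outside that step there are real gaps: the induction must be run with nilcharacters of bounded complexity (vector-valued, with controlled vertical frequency), not with arbitrary $(s-1)$-step nilsequences, since otherwise the ``top-degree piece'' of $\chi_h$ and its approximate cocycle identity in $h$ are not well-defined objects; passing from correlation of $\Delta_h f$ with $\chi_h$ for many \emph{individual} $h$ to a family varying in a structured (approximately linear) way in $h$ requires substantial additive-combinatorial work in the $h$ variable that you do not mention; and the uniformity of the finite collection $\cM_{s,\delta}$ in $N$ must be extracted from quantitative complexity bounds carried through the induction, not from a single pigeonholing of $(G_0,\Gamma_0)$. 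If you intend to use this theorem in the role it plays in this paper, simply cite \cite{gi-paper} and the Green--Tao--Ziegler proof rather than sketching it.
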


We will not reproduce definitions of the various terms here, referring the reader to \cite{gi-paper} itself, \cite{higher-order} or the forthcoming \cite{ben-book}.

This result only applies to functions whose domain is an interval in $\ZZ$.  However, in the case $s=1$, which is covered by classical Fourier analysis, similar statements can be made in much more general domains; in particular cyclic groups $\ZZ/N\ZZ$.  Moreover, in this setting the structured functions $F \circ p$ can be taken to be characters on $\ZZ/N\ZZ$, and hence naturally respect the algebraic structure of $\ZZ/N\ZZ$.

For general $s$, in some sense the algebraic content is reflected in the polynomial map $p$.  So, in formulating a version of Theorem \ref{theorem:gis-gtz} for cyclic groups, it makes some sense to require that $p$ respects the algebraic structure of $\ZZ/N\ZZ$.

\begin{definition}
  \label{definition:n-periodic}
  Let $G$ be a (filtered) nilpotent Lie group and $\Gamma$ a lattice in $G$.  We say a polynomial map $p \colon \ZZ \to G$ is \emph{$N$-periodic} if $p(x + N) p(x)^{-1} \in \Gamma$ for all $x$.
\end{definition}
  Informally, we refer to a function of the form $F \circ p \colon \ZZ \to \CC$, for an $N$-periodic polynomial map $p \colon \ZZ \to G$ and some automorphic function $F \colon G \to \CC$, as an \emph{\uppar{$N$-}periodic nilsequence}.

\begin{remark}
  Note that an $N$-periodic nilsequence in this sense should not be confused with the weaker property that $F \circ p$ is periodic \emph{as a function}, i.e.~that $F \circ p(x + N) = F \circ p(x)$ for all $x$.
\end{remark}

We can now state the main result, the $N$-periodic analogue of Theorem \ref{theorem:gis-gtz}.

\begin{theorem}[Main theorem]
  \label{theorem:main-thm}
  Fix an integer $s \ge 1$ and $\delta > 0$.  Then there exists a finite collection $\cM'_{s, \delta}$ of tuples $(G, \Gamma, d_G)$ as in Theorem \ref{theorem:gis-gtz}, such that the following holds:  for any positive integer $N$, and any function $f \colon \ZZ/N\ZZ \to \CC$ with $|f| \le 1$ and $\|f\|_{U^{s+1}(\ZZ/N\ZZ)} \ge \delta$, there exists
  \begin{enumerate}
    \item a tuple $(G, \Gamma, d_G) \in \cM'_{s, \delta}$;
    \item an $N$-periodic polynomial map $p \colon \ZZ \to G$;
    \item a function $F \colon G \to \CC$ that is automorphic with respect to $\Gamma$, bounded in magnitude by $1$ and is $O_{s,\delta}(1)$-Lipschitz with respect to $d_{G}$;
  \end{enumerate}
  such that $F \circ p$ correlates with $f$, i.e.~$|\EE_{x \in \ZZ/N\ZZ} f(x) \cdot \overline{F \circ p(x)}| \gg_{s, \delta} 1$.
\end{theorem}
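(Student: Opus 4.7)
The overall plan is to bootstrap Theorem \ref{theorem:gis-gtz} into Theorem \ref{theorem:main-thm} via a \emph{periodization} of nilsequences. The first move is routine: since for an $N$-periodic $f$ the norms $\|f\|_{U^{s+1}(\ZZ/N\ZZ)}$ and $\|f\|_{U^{s+1}[N]}$ are comparable up to a constant depending only on $s$, applying Theorem \ref{theorem:gis-gtz} to $f$ regarded on $[N]$ already yields a nilsequence $F \circ p$ with the required correlation. The nontrivial task is to force the polynomial map $p \colon \ZZ \to G$ to be $N$-periodic in the sense of Definition \ref{definition:n-periodic}, without losing control of the underlying nilmanifold.

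To analyse the obstruction I would write $p$ in Mal'cev coordinates adapted to the filtration, so that $p(x) = \prod_i \exp(t_i(x) X_i)$ with each $t_i$ a polynomial in $x$ of degree at most the filtration weight of $X_i$. The failure of $N$-periodicity is then encoded by the auxiliary polynomial $q(x) = p(x+N) p(x)^{-1}$, whose Mal'cev coordinates are polynomials of strictly lower degree; the defect can be recorded by finitely many real ``drift parameters'' $\alpha_j \in \RR/\ZZ$ measuring the image of $q$ modulo $\Gamma$. I would then construct an enlarged nilmanifold $(G', \Gamma')$, obtained from $(G, \Gamma)$ by adjoining one auxiliary circle direction per drift parameter, glued via a central-type extension with a twisted lattice $\Gamma'$ that absorbs the offending drift. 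A lifted polynomial $p'(x)$, built by coupling $p$ to the linear map $x \mapsto x/N$ in the new directions, is then $N$-periodic by design, and $F$ pulls back (after a routine gluing to respect the twisting of $\Gamma'$) to an automorphic function $F'$ on $G'$ with $F' \circ p' = F \circ p$.

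The main obstacle is keeping $(G', \Gamma', d_{G'})$ and the Lipschitz constant of $F'$ controlled by $s$ and $\delta$ alone. A priori the drift parameters $\alpha_j$ depend on both $p$ and $N$ and can look wild, so the construction must be carried out in a way where the group $G'$, the abstract lattice $\Gamma'$ and the metric depend only on the original finite family $\cM_{s, \delta}$ supplied by Theorem \ref{theorem:gis-gtz} (plus $s, \delta$), with $\alpha_j$ and $N$ entering only through the choice of lift $p'$. This is presumably where the paper's ``general construction in the category of nilsequences and nilmanifolds'' does its real work: I would expect an induction on the filtration step, reducing the $s$-step periodization problem to an $(s{-}1)$-step version applied to the shift polynomial $q$, and terminating in the abelian (Fourier) case where an $\alpha \in \RR$ is replaced by a nearby multiple of $1/N$ after pulling back through the circle $\RR/\ZZ$.

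Once this construction is in place, correlation on $[N]$ transfers to correlation on $\ZZ/N\ZZ$ automatically, since both $f$ and $F' \circ p'$ are $N$-periodic. The conclusion is then that the enlarged family $\cM'_{s, \delta}$ produced by the periodization functor — applied termwise to $\cM_{s, \delta}$ — is the finite list of nilmanifolds required by the statement of Theorem \ref{theorem:main-thm}.
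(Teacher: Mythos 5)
Your opening reduction has a real problem. You assert that $\|f\|_{U^{s+1}(\ZZ/N\ZZ)}$ and $\|f\|_{U^{s+1}[N]}$ are ``comparable up to a constant depending only on $s$'' and use this to apply Theorem \ref{theorem:gis-gtz} directly to $f$ viewed on $[N]$. That comparison is not obvious and, as far as I can tell, not true with constants depending only on $s$: the two norms average over different sets of parallelepipeds (cyclic ones can wrap around $N$), and the paper visibly avoids this comparison. Its Lemma \ref{lemma:norms-agree} only asserts a clean comparison for $f$ supported on an interval of length $\le N/2$, which is exactly why the deduction in Section \ref{sec:deduction} starts with a partition of unity into $20$ overlapping arcs and applies Theorem \ref{theorem:gis-gtz} to the cut-off pieces $\phi_m\cdot f$ on short intervals $J_m$, rather than to $f$ on $[N]$ as a whole.

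The deeper gap is the claim $F' \circ p' = F\circ p$. If $p'$ is $N$-periodic and $F'$ is $\Gamma'$-automorphic, then $F'\circ p'$ is an $N$-periodic function on $\ZZ$, whereas $F\circ p$ generally is not, so equality on all of $\ZZ$ is impossible. If you instead mean equality only on $\{0,\dots,N-1\}$, this still cannot be achieved with a continuous (let alone $O_{s,\delta}(1)$-Lipschitz) $F'$: lifting $p'$ to a continuous polynomial $\RR\to G'$, continuity and automorphy force $\lim_{x\to N^-}F'(p'(x)) = F'(p'(0))$, while $\lim_{x\to N^-}F(p(x)) = F(p(N))$, and $F(p(0))\neq F(p(N))$ in general. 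This ``seam'' obstruction is precisely why the paper's Theorem \ref{theorem:periodic-lift} only achieves $\tilde F(\tilde p(x)) = \phi(x/N)\cdot F(p(x\bmod N))$ for a smooth bump $\phi$ supported on $[0,1/2]$: the cut-off kills the discontinuity at the gluing point, and the partition-of-unity step at the start of the deduction is there to compensate for precisely this loss. Your proposal omits the cut-off and the partition of unity, and so the construction you sketch cannot yield a Lipschitz $F'$ with the required properties.

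Finally, the actual construction of $(\tilde G,\tilde\Gamma)$ in Section \ref{sec:general} is not a central extension by ``one circle per drift parameter'' nor an induction on step; it is a single, uniform construction: $\tilde G = \poly(\RR,G)\rtimes_T\RR$ with $T$ the shift action and $\tilde\Gamma = \poly(\ZZ,\Gamma)\rtimes_T\ZZ$, with $\tilde p(x) = (\id,x/N)\ast(q,0)$ for $q(t)=p(tN)$. This depends only on $(G,\Gamma)$ and not on $p$, $N$, or any ``drift parameters,'' which is what makes the finite family $\cM'_{s,\delta}$ come out uniformly; your sketch leaves exactly this uniformity unresolved.
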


Such a result finds an application in recent work of Candela and Sisask \cite{candela-sisask}, on convergence (over prime $N \rightarrow \infty$) of the minimum number of solutions to a fixed system of linear equations in a dense subset $A \subseteq \ZZ/N\ZZ$.  The full strength of Definition \ref{definition:n-periodic} is required for this application to succeed.

In that paper, the authors establish Theorem \ref{theorem:main-thm} assuming a technical condition on $N$, as a consequence of Szegedy's approach to the inverse theorem for the Gowers norms (see in particular \cite{szegedi-higher} as well as joint work with Camarena \cite{cs}).

Our approach is to prove the following result, which constructs periodic nilsequences from non-periodic ones.  Assuming this, it is straightforward to deduce Theorem \ref{theorem:main-thm}.

\begin{theorem}
  \label{theorem:periodic-lift}
  Suppose we are given an ($s$-step filtered) nilpotent Lie group $G$, a lattice $\Gamma$ in $G$ and a left-invariant Riemannian metric $d_G$, as well as a polynomial map $p \colon \ZZ \to G$. Also suppose $F \colon G \to \CC$ is an automorphic, $K$-Lipschitz function bounded in magnitude by $1$.  Further, suppose $\phi \colon \RR/\ZZ \to [0,1]$ is a smooth function supported on $[0,1/2]$, and $N$ is a positive integer.
  
  Then there exists:
  \begin{enumerate}
    \item a \uppar{canonical} $s$-step filtered nilpotent Lie group $\tilde{G}$ and lattice $\tilde{\Gamma}$ in $\tilde{G}$, which depend only on $G$ and $\Gamma$;
    \item a \uppar{canonical} $N$-periodic polynomial map $\tilde{p} \colon \ZZ \to \tilde{G}$ which depends only on $p$ \uppar{and $G, \Gamma$}; and
    \item an $O_{K, \Gamma, G, \phi}(1)$-Lipschitz function $\tilde{F} \colon \tilde{G} \to \CC$, automorphic with respect to $\tilde{\Gamma}$ and bounded in magnitude by one;
  \end{enumerate}
  such that
  \[
    \tilde{F}(\tilde{p}(x)) = \phi(x / N) \cdot F(p(x \bmod N))
  \]
  for all $x$, where $x \bmod N$ refers to the representative in $\{0, \dots, N-1\}$.
\end{theorem}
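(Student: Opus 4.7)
The plan is to construct the tuple $(\tilde{G}, \tilde{\Gamma}, \tilde{p}, \tilde{F})$ by canonically enlarging $G$ with auxiliary coordinates tracking a progress variable $x/N$, into which the obstruction to $N$-periodicity of $p$ can be absorbed.  The key design constraint is that $\tilde{G}$ and $\tilde{\Gamma}$ depend only on $(G, \Gamma)$, so all of the $p$- and $N$-dependence must be encoded in $\tilde{p}$ and $\tilde{F}$.

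First I would construct $\tilde{G}$ and $\tilde{\Gamma}$.  For a generic polynomial map $p \colon \ZZ \to G$ of degree $\le s$, the difference $p(x+N) \, p(x)^{-1}$ is itself a polynomial in $x$ of degree $<s$, which in general takes values outside $\Gamma$, so a naive direct-product lift with a single $\RR$ coordinate for $x/N$ will not succeed.  To accommodate this, I would take $\tilde{G} = G \rtimes H$ for a canonical $s$-step filtered nilpotent Lie group $H$ depending only on $G$ and $\Gamma$; a natural candidate is $H \cong \RR^s$ with the $i$th coordinate placed in filtration degree $i$, carrying the lattice $\ZZ^s$, giving $\tilde{\Gamma} = \Gamma \ltimes \ZZ^s$.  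By construction, neither $\tilde{G}$ nor $\tilde{\Gamma}$ sees $p$ or $N$.

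To define $\tilde{p}$, I would place polynomial functions of $x/N$ in the $H$-coordinates and a corrected lift of $p$ in the $G$-coordinate.  Writing $p$ in a Mal'cev-basis expansion $p(x) = \prod_i g_i^{P_i(x)}$ compatible with the filtration of $G$, the failure of $N$-periodicity appears as the non-integrality of the leading coefficients of $P_i(x+N) - P_i(x)$; by combining nearest-integer approximations of these coefficients with the $H$-coordinates, one can arrange that each shift $\tilde{p}(x+N) \, \tilde{p}(x)^{-1}$ is a single fixed element of $\tilde{\Gamma}$.  The function $\tilde{F}$ is then built by applying $\phi$ to the $H$-coordinates and $F$ to the $G$-coordinates, with correction terms compensating for the rounding errors used in $\tilde{p}$.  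The fact that $\phi$ is supported on $[0, 1/2]$ ensures $\tilde{F}$ vanishes on the half of $\tilde{G}/\tilde{\Gamma}$ close to the seam $x \equiv 0 \pmod{N}$, avoiding the discontinuity that would otherwise arise from the mismatch between $p(N-1)$ and $p(0)$.

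The main technical obstacle is the Lipschitz bound on $\tilde{F}$.  Since the nearest-integer errors are $O(1)$ and $\tilde{G}$ carries a fixed left-invariant metric (independent of $N$ and $p$), the formula for $\tilde{F}$ on the orbit of $\tilde{p}$ involves only $O(1)$ corrections, and coordinate extension to all of $\tilde{G}$ should inherit a Lipschitz bound of $O_{K, G, \Gamma, \phi}(1)$.  The delicate point is tracking constants through the (potentially non-abelian) group law of $\tilde{G}$ when $s > 1$: commutator terms in the multiplication could in principle introduce factors of $N$, and the bulk of the work is to verify that these cancel out appropriately.
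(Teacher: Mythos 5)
There is a genuine gap, and it sits exactly where you flag your own worry at the end. Your candidate $\tilde{G} = G \rtimes H$ with $H \cong \RR^s$ is too small, and the construction is underspecified at the crucial point: you never say how $H$ acts on $G$, and no canonical unipotent action depending only on $(G,\Gamma)$ will do the job. Already in the simplest case $G = \RR$, $\Gamma = \ZZ$, $s = 2$, $p(x) = \alpha x^2$ (the example worked out in Section \ref{sec:example}), any unipotent action of $\RR^s$ on $\RR$ is trivial, so your $\tilde{G}$ would be abelian; but the periodized function $x \mapsto \phi(x/N)\, e(\alpha N^2 \{x/N\}^2)$ is a genuine bracket quadratic, and realizing it as $\tilde{F}(\tilde{p}(x))$ on a \emph{fixed} torus forces $\tilde{F}$ to oscillate at scale $\sim \alpha N^2$ on the compact quotient, destroying the required $N$- and $p$-independent Lipschitz bound. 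That is precisely why the Heisenberg group is unavoidable even in this baby case. The same problem infects your rounding scheme more generally: the statement requires $\tilde{p}(x+N)\tilde{p}(x)^{-1} \in \tilde{\Gamma}$ for \emph{all} $x$, and $p(x+N)p(x)^{-1}$ is a nonconstant polynomial in $x$ once $\deg p \ge 2$, so rounding leading coefficients cannot make the defect a single fixed lattice element; and pushing the resulting ``correction terms'' into $\tilde{F}$ both breaks the $O_{K,G,\Gamma,\phi}(1)$ Lipschitz bound (the corrections are not $O(1)$, they grow with $N$) and makes $\tilde{F}$ depend on $p$ and $N$, which the intended application tolerates but the quantitative bound does not.

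The paper avoids all of this by choosing much more room, canonically: $\tilde{G} = \poly(\RR,G) \rtimes_T \RR$, where $\poly(\RR,G)$ is the finite-dimensional nilpotent Lie group of polynomial maps under pointwise multiplication and $T$ is the shift action, with $\tilde{\Gamma} = \poly(\ZZ,\Gamma) \rtimes_T \ZZ$ (discreteness and cocompactness quoted from \cite[Appendix C]{gi-paper}). The whole rescaled polynomial $q = p(N\,\cdot)$ is then a \emph{single group element}, and $\tilde{p}(x) = (\id_G, x/N) \ast (q,0)$, so $N$-periodicity is the one-line identity $\tilde{p}(x+N) = (\id_G,1)\ast\tilde{p}(x)$ with $(\id_G,1) \in \tilde{\Gamma}$ --- no rounding, no case analysis in $x$. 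The non-commutativity you would need between the ``progress variable'' and the polynomial data is supplied by the shift action (this is where the Heisenberg group emerges when $G=\RR$), and, crucially, $\tilde{F}(\poly(\ZZ,\Gamma)x, t) = \phi(t)F(\Gamma\, x(0))$ depends only on $F, G, \Gamma, \phi$ and not on $p$ or $N$, so the Lipschitz bound follows from compactness of $\tilde{\Gamma}\bs\tilde{G}$ with no tracking of $N$-dependent commutator terms at all. If you want to rescue your approach, the fix is essentially to replace your $H \cong \RR^s$ by the full group $\poly(\RR,G)$ and to let the new $\RR$-coordinate act by shifting the argument, at which point you have reconstructed the paper's argument.
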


\section{Deduction of Theorem \ref{theorem:main-thm}}
\label{sec:deduction}

We record the following easy lemma.
\begin{lemma}
  \label{lemma:norms-agree}
  Suppose $f \colon \ZZ/N\ZZ \to \CC$ is supported on an interval $J$ in $\ZZ/N\ZZ$ satisfying $N/20 \le |J| \le N/2$.  Write $f' \colon J \to \CC$ for the restriction of $f$ to that interval.  Then for any $s \ge 1$,
  \[
    \|f'\|_{U^{s+1}(J)} = C \|f\|_{U^{s+1}(\ZZ/N\ZZ)}
  \]
  for some constant $C$ independent of $f$, satisfying $C = \Theta_s(1)$.
\end{lemma}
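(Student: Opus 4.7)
The plan is to write both Gowers norms as sums over parallelepipeds with vertices in $J$, and to use the hypothesis $|J| \le N/2$ to match these two counting problems exactly.

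After translating so that $J = \{0, 1, \dots, L-1\}$ with $L = |J|$, I would use the fact that $f$ is supported on $J$ to rewrite the standard cyclic-group expansion as
\[
  \|f\|_{U^{s+1}(\ZZ/N\ZZ)}^{2^{s+1}} = \frac{1}{N^{s+2}} \sum_{(y_\omega)} \prod_{\omega \in \{0,1\}^{s+1}} \cC^{|\omega|} f(y_\omega),
\]
where the sum ranges over $\ZZ/N\ZZ$-parallelepipeds (tuples $(y_\omega)_\omega$ of the form $y_\omega = x + \omega \cdot h$ for some $(x, h) \in (\ZZ/N\ZZ)^{s+2}$) all of whose vertices lie in $J$. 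Unwinding $\|f'\|_{U^{s+1}(J)} := \|f'\|_{U^{s+1}(\ZZ/\tilde M\ZZ)} / \|1_J\|_{U^{s+1}(\ZZ/\tilde M\ZZ)}$ for $\tilde M$ large enough that no wraparound occurs gives, analogously,
\[
  \|f'\|_{U^{s+1}(J)}^{2^{s+1}} = \frac{1}{P_\ZZ(J)} \sum_{(y_\omega)} \prod_\omega \cC^{|\omega|} f(y_\omega),
\]
where now the sum is over honest integer parallelepipeds with $y_\omega \in J \subset \ZZ$, and $P_\ZZ(J) := \#\{(x, h) \in \ZZ^{s+2} : x + \omega \cdot h \in J \text{ for all } \omega\}$.

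The main step, which is the only place the hypothesis $L \le N/2$ is used, is to show that reduction mod $N$ is a bijection between integer parallelepipeds with vertices in $J$ and $\ZZ/N\ZZ$-parallelepipeds with vertices in $J$. Injectivity is immediate since the representatives in $\{0, \dots, L-1\}$ are unique. For surjectivity, given a mod-$N$ parallelepiped $(y_\omega)$ with $y_\omega \in J$, I would lift each vertex to its representative $\tilde y_\omega \in \{0, \dots, L-1\}$ and prove by induction on $|\omega|$ that $\tilde y_\omega = \tilde y_0 + \omega \cdot \tilde h$ in $\ZZ$, where $\tilde h_i := \tilde y_{e_i} - \tilde y_0$. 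For the inductive step, pick any $i$ with $\omega_i = 1$ and set $\omega' = \omega - e_i$; the mod-$N$ identity $y_\omega - y_{\omega'} - y_{e_i} + y_0 \equiv 0 \pmod N$ lifts to $\tilde y_\omega - \tilde y_{\omega'} - \tilde y_{e_i} + \tilde y_0 \in N\ZZ$, and this integer lies in $[-2(L-1), 2(L-1)] \subset (-N, N)$ when $L \le N/2$, forcing it to vanish. This is the only delicate computation in the argument.

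Granted the bijection, the two sums above coincide, and one reads off
\[
  \|f'\|_{U^{s+1}(J)}^{2^{s+1}} = \frac{N^{s+2}}{P_\ZZ(J)} \, \|f\|_{U^{s+1}(\ZZ/N\ZZ)}^{2^{s+1}},
\]
so the identity holds with $C = (N^{s+2}/P_\ZZ(J))^{1/2^{s+1}}$, manifestly independent of $f$. The bound $C = \Theta_s(1)$ then follows from the standard cube-count $P_\ZZ(J) = \Theta_s(L^{s+2})$ together with the hypothesis $N/20 \le L \le N/2$, which forces $N/L = \Theta(1)$ with absolute constants.
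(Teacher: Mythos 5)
Your proposal is correct and follows essentially the same route as the paper: after translating $J$ to $\{0,\dots,|J|-1\}$, the heart of the matter is that a mod-$N$ parallelepiped with all vertices in $J$ lifts to a genuine integer parallelepiped, which you establish via the $2$-face congruence $\tilde y_\omega - \tilde y_{\omega'} - \tilde y_{e_i} + \tilde y_0 \equiv 0 \pmod N$ bounded in absolute value by $2(|J|-1) < N$ --- exactly the paper's $s=1$ reduction, with your induction on $|\omega|$ playing the role of the ``all $2$-dimensional faces'' characterization. The only difference is that you compute the normalization constant $C = (N^{s+2}/P_\ZZ(J))^{1/2^{s+1}}$ explicitly, whereas the paper dismisses it as coming purely from the definition of the interval Gowers norm; this is a harmless (and slightly more careful) elaboration, not a different argument.
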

\begin{proof}
  By translation-invariance we may assume $J = \{0, \dots, |J| - 1\}$.  It suffices to prove that any parallelepiped $(x + h \cdot \omega)_{\omega \in \{0,1\}^{s+1}}$ in $\ZZ/N\ZZ$ that is entirely contained in $J$ is also a parallelepiped with respect to $\ZZ$ (after embedding $\{0, \dots, |J|-1\}$ in $\ZZ$ in the obvious way).  The constant $C$ then arises purely from the normalization constant in the definition of a Gowers norm on an interval and can be ignored.

  It suffices to check the case $s=1$, since any configuration $(x_\omega)$ is a parallelepiped if and only if all its $2$-dimensional ``faces'' are parallelepipeds.  In other words, we want to know that if $x,y,z,w \in \{0,\dots,|J|-1\}$ and $x - y - z + w \equiv 0 \pmod{N}$ then $x - y - z + w = 0$; but this is clear as $-2 (|J| - 1) \le x - y - z + w \le 2 (|J| - 1)$.
\end{proof}

We proceed to the proof.
\begin{proof}[Proof of Theorem \ref{theorem:main-thm} assuming Theorem \ref{theorem:periodic-lift}]
  We are free to assume, to avoid tedious issues, that $N$ is not too small (say $N \ge 100$) since the result is trivial for small $N$ (using, say, the $U^2$ case).

  We choose $20$ closed intervals $I_0, \dots, I_{19}$ in $\RR/\ZZ$, each of width exactly $1/10$, whose interiors cover $\RR/\ZZ$.  For sake of argument we can fix $I_m = [m/20, m/20 + 1/10] \pmod{1}$.  Now choose a smooth partition of unity $(\rho_m)$ on $\RR/\ZZ$ adapted to $I_m$.  The functions $\rho_m$ descend to a partition of unity $(\phi_m)$ on $\ZZ/N\ZZ$ in the obvious way, i.e.~by $\phi_m \colon x \mapsto \rho_m(x/N)$.  Then, each such $\phi_m$ is supported on an interval of length either $\lfloor N/10 \rfloor$ or $\lceil N/10 \rceil$; call this interval $J_m$.

  We deduce that
  \begin{align*}
    \|f\|_{U^{s+1}(\ZZ/N\ZZ)} &= \left\|\sum_m \phi_m \cdot f \right\|_{U^{s+1}(\ZZ/N\ZZ)}  \\
     &\le \sum_m \|\phi_m \cdot f\|_{U^{s+1}(\ZZ/N\ZZ)}  \\
     &= C \sum_m \|\phi_m \cdot f\|_{U^{s+1}(J_m)}
  \end{align*}
  where we have used the triangle inequality for $U^{s+1}$ and Lemma \ref{lemma:norms-agree}.

  Hence, for some $m$ we have $\|\phi_m \cdot f\|_{U^{s+1}(J_m)} \gg_s \delta$.  Translating everything if necessary, we can assume that $\rho_m$ is supported on $[0,1/2]$ and $J_m \subseteq \{0, \dots, N/2\}$.

  We apply Theorem \ref{theorem:gis-gtz} to the function $\phi_m \cdot f$ on $J_m$ to obtain $(G, \Gamma, d_G) \in \cM_{s, \Omega_s(\delta)}$ and a nilsequence $\psi = F \circ p$, where $p \colon \ZZ \to G$ is a polynomial map, such that $\psi$ correlates with $\phi_m \cdot f$, i.e.~$|\EE_{x \in J_m} (\phi_m \cdot f)(x) \overline{\psi(x)} | \gg_{s, \delta}(1)$.

  Now apply Theorem \ref{theorem:periodic-lift} to the nilsequence $\psi$ and the smooth cut-off $\rho_m$.  We find that $\psi' \colon x \mapsto \rho_m(x/N) \psi(x \bmod N)$ is an $N$-periodic nilsequence with respect to some $\tilde{G}, \tilde{\Gamma} $ (depending only on $G, \Gamma$).

  Defining $\cM'_{s, \delta}$ to be those $(\tilde{G}, \tilde{\Gamma}, d_{\tilde{G}})$ obtained by applying Theorem \ref{theorem:periodic-lift} to elements of $\cM_{s, \Omega_s(\delta)}$, and observing that 
  \begin{align*}
    \left| \EE_{x \in \ZZ/N\ZZ} f(x) \overline{\psi'(x)} \right| &= \left| \EE_{x \in \{0, \dots, N-1\}} f(x) \overline{\phi_m(x) \psi(x)} \right| \\ &= \left| \EE_{x \in J_m} (\phi_m \cdot f)(x) \overline{\psi(x)} \right| \gg_{s,\delta} 1\ ,
  \end{align*}
  this completes the proof.
\end{proof}

\section{The proof of Theorem \ref{theorem:periodic-lift}}
\label{sec:general}

The construction we will use is very closely related to that described in \cite[Appendix C]{gi-paper}, which in turn is based on an argument of Furstenberg \cite[page 31]{furst}.  The goal there was rather different to ours; namely, to show that any polynomial nilsequence can be realized as a linear one, at the expense of augmenting $G$, $\Gamma$ to some $\tilde{G}$, $\tilde{\Gamma}$.  Both problems require one to ``invent'' the Heisenberg group, or a variant of it, in the special case of the input $G = \RR$, $\Gamma = \ZZ$; and it turns out that their general cases are also closely related.

Our proof therefore recaps that construction, with only a few (albeit significant) modifications.  We quote a number of facts from \cite[Appendix C]{gi-paper} without reproducing the proofs.

\begin{proof}[Proof of Theorem \ref{theorem:periodic-lift}]

    We consider the nilpotent Lie group of polynomial maps $\poly(\ZZ, G)$ with pointwise multiplication.  There is an isomorphism $\poly(\ZZ, G) \cong \poly(\RR, G)$ given by restriction (see \cite[Appendix C]{gi-paper}).  We also consider the subgroup $\poly(\ZZ, \Gamma)$.  By \cite[Lemma C.1]{gi-paper} the latter is discrete and co-compact.

  We also define the shift action
  \begin{align*}
    T \colon \RR \times \poly(\RR, G) &\to \poly(\RR, G) \\
                                 (t, p) &\mapsto p(\cdot + t)
  \end{align*}
  and thereby the semi-direct product $\tilde{G} = \poly(\RR, G) \rtimes_T \RR$.  To avoid confusion we note that by this ordering we mean to imply a group operation
  \[
    (g, t) \ast (g', t') = (g \cdot t(g'), t + t') \ .
  \]
  
  This has a subgroup $\tilde{\Gamma} = \poly(\ZZ, \Gamma) \rtimes_T \ZZ$ (since the action of $T$ respects $\poly(\ZZ, \Gamma)$).  The latter is discrete and co-compact in $\tilde{G}$.  Indeed, every right coset $\tilde{\Gamma} x$ has a (unique) representative $(\poly(\ZZ, \Gamma) g, t)$ where $t \in [0,1)$.

    We can also be explicit about the filtration on $\tilde{G}$, which is $\tilde{G}_0 = \tilde{G}$, $\tilde{G}_1 = \poly(\RR, G^{+1}) \rtimes_T \RR$ and $\tilde{G}_i = \poly(\RR, G^{+i}) \rtimes_T \{0\}$ for $i \ge 2$.  Here $G^{+i}$ denotes the group $G_i$ with the shifted filtration $G^{+i}_j = G_{i+j}$.  Note in particular that $\tilde{G}_{s+1}$ is trivial (noting $s \ge 1$).

  Let $q \in \poly(\RR, G)$ be a rescaled version of $p$, by $q(x) = p(x N)$.  Now we define
  \begin{align*}
    \tilde{p} \colon \ZZ &\to \tilde{G} \\
    x &\mapsto (\id_G, x/N) \ast (q, 0)
  \end{align*}
  which is certainly a polynomial map, and moreover is $N$-periodic since
  \begin{align*}
    \tilde{p}(x + N) &= (\id_G, x/N + 1) \ast (q, 0) \\
                                      &= (\id_G, 1) \ast (\id_G, x/N) \ast (q, 0) \\
                                    &\in \tilde{\Gamma}\, \tilde{p}(x) \ .
  \end{align*}
  Finally, we define $\tilde{F}$ on the fundamental domain $(\poly(\ZZ, \Gamma) \bs \poly(\RR, G)) \rtimes_T [0, 1)$ by
  \[
    \tilde{F}(\poly(\ZZ, \Gamma)\, x, t) = \phi(t) \cdot F(\Gamma\, x(0))
  \]
  and extend periodically by $\tilde{\Gamma}$.  In other words,
  \[
    \tilde{F}(\poly(\ZZ, \Gamma)\, x, t) = \phi(\{t\}) \cdot F(\Gamma\, x(-\lfloor t \rfloor))
  \]
  for general $t$.

  It is now straightforward to check that
  \begin{align*}
    \tilde{F}(\tilde{p}(x)) &= \tilde{F}(q(\cdot + x/N), x/N) \\
                            &= \phi(\{x/N\}) \cdot F(q(x/N - \lfloor x/N \rfloor)) \\
                            &= \phi(\{x/N\}) \cdot F(p(x \bmod N))
  \end{align*}
  as required.
  \\[\baselineskip]
  There remains only the statement that $\tilde{F}$ is $O_{K, G, \Gamma, \phi}(1)$-Lipschitz.  For this to makes sense, we need to equip $\poly(\RR, G)$ and $\tilde{G}$ with left-invariant Riemannian metrics $d_{\poly(\RR, G)}$, $d_{\tilde{G}}$, which define corresponding metrics $d_{\poly(\ZZ, \Gamma) \bs \poly(\RR, G)}$, $d_{\tilde{\Gamma} \bs \tilde{G}}$.  There are various ways one could make these choices canonical; ultimately it doesn't matter as any two choices will be bi-Lipschitz on the respective compact manifolds.
  
  It is certainly clear that $\tilde{F}$ is continuous: this is immediate except at points $(t, x)$ where $t \in \ZZ$, but the support properties of $\phi$ guarantee continuity there also.  Note also that the definition of $\tilde{F}$ depends only on $F$, $G$ and $\Gamma$, not $p$ or $N$.

  Using compactness of $\tilde{\Gamma} \bs \tilde{G}$, it is routine to adapt these observations to show $\tilde{F}$ is Lipschitz and to obtain the quantitative statement about the Lipschitz constant.

\end{proof}

\section{An example}
\label{sec:example}

We consider the case where $s=2$ and the original nilsequence $f$ is the function
\begin{align*}
  f \colon \ZZ &\to \CC \\
  x &\mapsto e(\alpha x^2)
\end{align*}
for some real number $\alpha$.  Note that this is trivially a (polynomial) nilsequence by composing the polynomial map $p \colon \ZZ \to \RR$, $p(x) = \alpha x^2$ with the automorphic function $F(t) = e(t)$.

In general, $p$ fails to be $N$-periodic unless $\alpha \in \frac{1}{N} \ZZ$.   Moreover, $f$ can fail to correlate significantly with any $N$-periodic phase quadratic: i.e.~(to prove Theorem \ref{theorem:main-thm}) it would not suffice just to alter the value of $\alpha$.

Theorem \ref{theorem:periodic-lift} requires us (essentially) to consider the function $f'$ obtained by repeating $f$ $N$-periodically, i.e.~$f'(x) = e(\alpha N^2 \{x / N\}^2)$.  This is a bracket quadratic, and hence general machinery due to Bergelson and Leibman \cite{bergelson-leibman} tells us that it is -- essentially -- a nilsequence.  With such a simple example we can make this very explicit.  To simplify matters, we consider the case where $\alpha = a / N^2$ for some integer $a$; these examples \emph{are} fairly dense in the whole space.

Let
\[
  G = \heis{\RR}{\RR}{\RR};\ \ \Gamma = \heis{\ZZ}{\ZZ}{\ZZ}
\]
so $G$ is the usual Heisenberg group and $\Gamma$ the standard lattice.  We abbreviate the matrix
$
  \heis{x}{y}{z}
$
to $(x,y,z)$.  Now define
\begin{align*}
  p \colon \ZZ &\to G \\
t &\mapsto (t / N, 2 a t / N, a t^2 / N^2)
\end{align*}
which is a polynomial map.  Moreover, it is $N$-periodic, since
\[
  p(t + N) = \heis{t / N  + 1}{2 a t / N + 2 a}{a t^2/N^2 + 2 a t / N + a} = \heis{1}{2 a}{a} p(t) \in \Gamma p(t) \ .
\]
A fundamental domain for $\Gamma \bs G$ is given by elements $(x,y,z) \in G$ with $x,y,z \in [0,1)$, and the map sending a point in $G$ to the corresponding point in the fundamental domain is
$
  (x,y,z) \mapsto (\{x\}, \{y\}, \{z - y \lfloor x \rfloor \})
$,
so we define $F \colon G \to \CC$ by $F(x,y,z) = \phi(x) e(z)$ for points in the fundamental domain, and extend $\Gamma$-periodically.  Here $\phi \colon \RR / \ZZ \to \RR_{\ge 0}$ is our smooth function supported on $[0,1/2]$.  In other words,
$
  F(x,y,z) = \phi(\{x\}) e(z - y \lfloor x \rfloor) 
$.
It is now straightforward to check that $F \circ p(t) = \phi(t/N) e(a t^2 / N^2)$ if $0 \le t < N$, which together with the $N$-periodicity above gives the result.
\\[\baselineskip]
It turns out that this construction is \emph{not} exactly what comes out of the general machinery of Section \ref{sec:general}.  However, the two are closely, albeit slightly subtly, related.  We will not run through the construction in detail, but sketch some of the features.

In the Heisenberg example above, the group $G$ is $2$-step nilpotent with a $2$-step filtration.  The group $\tilde{G}$ from the general construction turns out to be $3$-step nilpotent \emph{as a group}, even though we were careful to give it a $2$-step filtration.

This apparent paradox is a consequence of the fact that the filtration of $\tilde{G}$ is not \emph{proper}, i.e.~$\tilde{G}_0 \ne \tilde{G}_1$.  Hence all the $3$-step behaviour of $\tilde{G}$ is ``hidden'' in $\tilde{G}_0$ where it does not significantly affect the filtration.\footnote{A group with an improper $s$-step filtration need not even be nilpotent in general.}

Although it may seem illegal to use a $3$-step nilpotent group when $s=2$, this is in fact compatible with the definitions of \cite{gi-paper}.  Indeed, the same phenomenon arises when applying the construction in Appendix C of that paper.

In the special case that the image of $\tilde{p}$ is contained in $\tilde{G}_1$, we can restrict everything to $\tilde{G}_1$ equipped with the proper filtration $\tilde{G}_1 = \tilde{G}_1 \supseteq \tilde{G}_2 \supseteq \{\id\}$.  Hence $\tilde{G}_1$ is a $2$-step nilpotent group.  Moreover, it turns out that this special case occurs whenever $\alpha = a / N^2$ as required above.  We also find that $\tilde{G}_1$ is exactly the Heisenberg group, and indeed this process yields exactly the explicit construction we have just described.

For general $\alpha$ we do not have that the image of $\tilde{p}$ is contained in $\tilde{G}_1$; but it is contained in some \emph{coset} of $\tilde{G}_1$.  So, by shifting everything by a small element of $\tilde{G}$ we can move to the previous case and again identify $\tilde{F} \circ \tilde{p}$ with a Heisenberg nilsequence.

In fact, it is generally true that any ``improper'' nilsequence in the above sense can be identified with a ``proper'' one, but we will not prove such a result.

\bibliography{master}{}
\bibliographystyle{halpha}

\end{document}